\definecolor{webgreen}{rgb}{0,.5,0}
\definecolor{webbrown}{rgb}{.6,0,0}
\newcommand{\seqnum}[1]{\href{https://oeis.org/#1}{\rm \underline{#1}}}
\newtheorem{theorem}{Theorem}
\date{}
\begin{document}
%------------

%\begin{center}
%\epsfxsize=4in
%\leavevmode\epsffile{logo129.eps}
%\end{center}

\begin{center}
\vskip -0.4in
Journal of Integer Sequences, Vol. 24 (2021),\\
Article 21.10.5
\vskip 0.4in
\end{center}

%------------
\begin{center}
\vskip 1cm{\LARGE\bf
On the Solution of the Equation $n = ak + bp_k$ \\
\vskip .1in
by Means of an Iterative Method
}
\vskip 1cm
Juan Luis Varona\footnote{The work of the author is supported by the spanish
Minis\-te\-rio de Cien\-cia, Inno\-va\-ci\'on y Uni\-ver\-si\-da\-des 
under Grant PGC2018-096504-B-C32.}\\
Departamento de Matem\'aticas y Computaci\'on\\
Universidad de La Rioja\\
Logro\~no\\
Spain\\
\href{mailto:jvarona@unirioja.es}{\tt jvarona@unirioja.es}
\end{center}

\vskip .2in
%------------

\begin{abstract}
For fixed positive integers $n$, we study the solution of the equation
$n = k + p_k$,
where $p_k$ denotes the $k$th prime number, by means
of the iterative method
\[
  k_{j+1} = \pi(n-k_j), \qquad k_0 = \pi(n),
\]
which converges to the solution of the equation, if it exists.
We also analyze the equation $n = ak + bp_k$
for fixed integer values $a \ne 0$ and $b>0$,
and its solution by means of a corresponding iterative method.
The case $a>0$ is somewhat similar to the case $a=b=1$, 
while for $a<0$ the convergence and usefulness of the method 
are less satisfactory.
The paper also includes a study of the dynamics of the iterative methods.
\end{abstract}

%------------
\section{Introduction and main results}
%------------

Let us imagine that we want to know if a Fibonacci number, a pentagonal number, one of a pair of amicable numbers or a Pythagorean triple, the terms in a sequence in the On-line Encyclopedia of Integer Sequences (OEIS, \cite{OEIS}), or any other positive integer can be written as
\begin{equation}
\label{eq:n=k+pk}
  n = k + p_k,
\end{equation}
where $(p_k)_{k \ge 1}$ is the sequence of the prime numbers. (Let us note that neither Mathematica, Maple nor SageMath are able to solve the equation~\eqref{eq:n=k+pk} even in simple cases with small numbers, for instance for $n=10+p_{10}=39$.)

Of course, one can look at the sequence $(k+p_k)_{k \ge 1}$ (which is the sequence \seqnum{A014688} in the OEIS) and check if $n$ occurs in the sequence; but this can be a hard task. Instead, we give here an iterative process which, given $n$, tells whether the equation \eqref{eq:n=k+pk} has a solution or not, and, in the affirmative case, provides the solution~$k$.

It is worth noting that there exist a large number of iterative methods for solving equations of the form $f(x) = 0$ where $f$ is a function defined on $\mathbb{R}$, $\mathbb{C}$, $\mathbb{R}^d$, $\mathbb{C}^d$ or a Banach space. However, there are very few iterative methods for solving equations in the theory of numbers. 
Perhaps the best example of an iterative method for solving an equation in this area is \cite{KnXe}, where the authors show how classical rootfinding methods from numerical analysis can be used to calculate inverses of units modulo prime powers.

Along the paper, we will use some well known properties of prime numbers, which can be found in many texts; see, for instance, \cite{Apo, BatDia, DKoLu, FiRo, New}. 
As usual, $\pi(x)$ denotes the quantity of prime numbers $\le x$, which is a nondecreasing function. Moreover, we will often use the basic properties $\pi(p_k) = k$ and $p_{\pi(n)} \le n$ (with $p_{\pi(n)} = n$ if and only if $n$ is prime). Let us also note that $k + p_k$ is strictly increasing with~$k$. Thus, for a fixed $n$ the solution of~\eqref{eq:n=k+pk}, if it exists, is unique.

A possible method to try to find $k$ such that $n = k + p_k$ is to take $f(k) = k + p_k - n$ and solve the equation $f(k)=0$ by means of a bisection method starting at the initial points $k_0 = 1$ and $k_1=n$. The function $f(k)$ is increasing and satisfies $f(k_0)<0$ and $f(k_1)>0$ so, if the solution exists, the bisection method always finds it. Moreover, the number of iterations needed to reach the solution is $\lfloor\log_2(n-1)\rfloor$ (of course, this number can be a bit different due to the rounding of the bisection to an integer in every step). Instead, we propose another method which is faster, as we can see in the examples, and has a nice dynamics with its own interest.

The iterative method that we are going is simple and, as far as we know, has not been proposed before in this context; it is a kind of regula falsi method to solve an equation $f(k) = 0$ with $f(k) = \pi(n-k) - k$, that is not equivalent to~\eqref{eq:n=k+pk}, but closely related.
Since $p_k \approx k \log k$, it follows that $p_k$ gives the main contribution to the sum $k+p_k$. 
The idea is then to approximate the equation \eqref{eq:n=k+pk} by the equation $n = p_k$, and so to take a first guess $k = \pi(n)$; then we adjust the initial guess by successive corrections. 
Let us note that any $k$ which solves $n=k+p_k$ satisfies $n-k=p_k$, a prime number, so $\pi(n-k) = k$.
Thus, the procedure for solving~\eqref{eq:n=k+pk} is based on the iterative scheme $k_{j+1} = \pi(n-k_j)$, and the goal is to look for a fixed point.
The following result shows the dynamics of the iterative method:

\begin{theorem}
\label{theo:iter}
Let $n \ge 4 $ be a given integer, and let us define the iterative process
\begin{equation}
\label{eq:iter}
  k_{j+1} = \pi(n-k_j), \qquad k_0 = \pi(n).
\end{equation}
After a finite number of steps, one of these cases occurs:
\begin{enumerate}
\item[(i)] We get a fixed point $k^{*}$.
\item[(ii)] We get a cycle $\{k',k''\}$ with $k'' = k'+1$, $n-k'$ a prime number, $(k_{2j})_{j\ge0}$ is a decreasing sequence which converges to $k''$, and $(k_{2j+1})_{j\ge0}$ is an increasing sequence which converges to~$k'$.
\end{enumerate}
\end{theorem}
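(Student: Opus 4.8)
The plan is to reformulate the recursion as the iteration of a single nonincreasing integer-valued function and then exploit the elementary fact that a nonincreasing map has a nondecreasing square.

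First I would set $g(k) = \pi(n-k)$, so that \eqref{eq:iter} reads $k_{j+1} = g(k_j)$ with $k_0 = \pi(n)$. Since $\pi$ is nondecreasing, $g$ is nonincreasing; moreover every iterate lies in the finite range $\{0,1,\dots,\pi(n)\}$, because $0 \le n-k_j \le n$. Using the basic inequality $n - \pi(n) < n$ (valid since $\pi(n) \ge 2$ for $n \ge 4$) I get $k_1 = \pi(n-\pi(n)) \le \pi(n) = k_0$. Applying $g$, which reverses inequalities, then yields $k_1 \le k_2 \le k_0$ and $k_1 \le k_3 \le k_2$, providing the base cases for the induction below.

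Next I would pass to the even and odd subsequences $a_j = k_{2j}$ and $b_j = k_{2j+1}$. Because $h := g\circ g$ is nondecreasing, we have $a_{j+1} = h(a_j)$ and $b_{j+1} = h(b_j)$; so from $a_1 \le a_0$ and $b_0 \le b_1$ a one-line induction shows that $(a_j)$ is nonincreasing and $(b_j)$ is nondecreasing. A further induction, again using that $g$ reverses inequalities, gives the interlacing $b_j \le a_j$ for every $j$. A bounded monotone sequence of integers is eventually constant, so after finitely many steps $(a_j)$ equals some value $A$ and $(b_j)$ equals some value $B$, with $B \le A$, $g(A)=B$, and $g(B)=A$. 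Hence the full sequence reaches either the fixed point $A=B=:k^{*}$ (case (i)) or the two-cycle $\{B,A\}$ (case (ii)).

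It remains to pin down the structure of the cycle, which I expect to be the crux. From $\pi(n-A)=B$ and $\pi(n-B)=A$ I read off $\pi(n-B)-\pi(n-A)=A-B$, that is, the half-open interval $(n-A,\,n-B]$ contains exactly $A-B$ primes. But this interval also contains exactly $A-B$ integers, so \emph{every} integer in it must be prime. Since no three consecutive integers can all be prime (one of any two consecutive integers is even), and the only pair of consecutive primes is $\{2,3\}$, the length $A-B$ is at most $2$. The case $A-B=2$ would force $\{n-A+1,\,n-B\}=\{2,3\}$, hence $A=n-1$ and then $B=\pi(n-A)=\pi(1)=0$, while simultaneously $B=A-2=n-3$, giving $n=3$ and contradicting $n\ge 4$. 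Therefore $A-B=1$, and the single integer $n-B$ of the interval is prime. Writing $k'=B$ and $k''=A$ recovers $k''=k'+1$, with $n-k'$ prime, $(k_{2j})$ decreasing to $k''$ and $(k_{2j+1})$ increasing to $k'$, as claimed. The main obstacle is precisely this last paragraph: converting the fixed point of $g\circ g$ into the counting identity on primes and excluding the degenerate length-two cycle.
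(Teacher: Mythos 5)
Your proof is correct and takes essentially the same route as the paper's: alternating monotone even/odd subsequences that are eventually constant, followed by counting the primes in the interval $(n-k'',\,n-k']$ against the number of integers it contains and using that $2,3$ is the only pair of consecutive primes. The only minor difference is in excluding the degenerate case $k''-k'=2$: you derive the self-contained contradiction $n=3$, whereas the paper rules out $k'=\pi(1)=0$ by noting that $k'$ is the limit of the increasing odd-indexed subsequence.
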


We postpone the proof of this theorem to Section~\ref{sec:proofs}.
For the moment, let us see the consequences of this result on the solutions of the equation~\eqref{eq:n=k+pk}.

\begin{theorem}
\label{theo:sol}
Let $n \ge 4$ be a given integer, the equation $n = k + p_k$,
and the iterative process
\begin{equation*}
  k_{j+1} = \pi(n-k_j), \qquad k_0 = \pi(n).
\end{equation*}
Under these circumstances:
\begin{enumerate}
\item[(i)] If we get a fixed point $k^{*}$ (case (i) in Theorem~\ref{theo:iter}), then $k^{*}$ is the solution of the equation
\[
  n' = k + p_k
\]
where $n' = \max\{ j + p_j \le n : j \ge 1\}$. 
In particular, the equation $n = k + p_k$ has a solution if and only if $n'=n$, 
and then $k^{*}$ is the solution.
(In practice it is enough to check whether $k^{*} + p_{k^{*}} = n$ or not.)
\item[(ii)] If we get a cycle $\{k',k''\}$ (with $k''=k'+1$, case (ii) in Theorem~\ref{theo:iter}), the equation $n = k + p_k$ has no solution.
\end{enumerate}
\end{theorem}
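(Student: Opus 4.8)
The plan is to treat the two cases of Theorem~\ref{theo:iter} separately, translating each into a statement about the fixed points of the non-increasing map $g(x) = \pi(n - x)$, among which the solutions of $n = k + p_k$ form a subset: indeed, any solution $k$ satisfies $n-k = p_k$, whence $g(k) = \pi(p_k) = k$.

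For case (i), I would start from the fixed-point identity $k^* = \pi(n - k^*)$ and rewrite it using the meaning of $\pi$: the statement $\pi(m) = k^*$ is equivalent to $p_{k^*} \le m < p_{k^*+1}$, so the fixed point is characterized by the double inequality $p_{k^*} \le n - k^* < p_{k^*+1}$. The left half gives $k^* + p_{k^*} \le n$, which shows $k^*$ belongs to the finite index set $\{j \ge 1 : j + p_j \le n\}$ (nonempty for $n \ge 4$, since $j=1$ gives $1+p_1=3\le n$); since $n' = k' + p_{k'}$, where $k'$ is the largest index in this set, we get $k^* \le k'$. Then I would rule out $k^* < k'$: if $k^* + 1 \le k'$, then $p_{k^*+1} \le p_{k'}$, and combining the right half $n < k^* + p_{k^*+1}$ with $k^* < k'$ and the defining bound $k' + p_{k'} \le n$ yields $n < k^* + p_{k'} < k' + p_{k'} \le n$, a contradiction. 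Hence $k^* = k'$, so $k^* + p_{k^*} = n'$ and $k^*$ solves $n' = k + p_k$. The claim about the solvability of $n = k + p_k$ then follows because $j + p_j$ is strictly increasing, so $n$ is of this form precisely when $n = n'$, in which case the unique solution is $k^*$.

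For case (ii), the key observation is that $g(x) = \pi(n - x)$ is non-increasing in $x$. The cycle data give $g(k') = k'' = k' + 1$ and $g(k'') = g(k'+1) = k'$. I would then use monotonicity to compare $g(x)$ with $x$ on each side of the cycle: for every integer $x \le k'$ we have $g(x) \ge g(k') = k' + 1 > k' \ge x$, so $g(x) > x$; and for every integer $x \ge k' + 1$ we have $g(x) \le g(k'+1) = k' < k'+1 \le x$, so $g(x) < x$. Since these two regimes cover all integers and leave no integer strictly between the consecutive values $k'$ and $k'+1$, the map $g$ has no integer fixed point. As every solution of $n = k + p_k$ would be such a fixed point, the equation has no solution.

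I expect the main obstacle to be the bookkeeping in case (i): one must carefully convert $\pi(n - k^*) = k^*$ into the sharp two-sided bound and keep strict and non-strict inequalities straight when squeezing $k^*$ between the membership bound $k^* \le k'$ and the contradiction that excludes $k^* < k'$. By contrast, case (ii) is short once one notices that a $2$-cycle of a non-increasing map sitting at consecutive integers forces $g(x) \ne x$ everywhere.
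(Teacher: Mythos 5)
Your proof is correct. Case (i) is essentially the paper's own argument: both show $k^{*}$ lies in the set $A=\{j\ge 1: j+p_j\le n\}$ via $p_{k^{*}}\le n-k^{*}$, and both rule out $k^{*}<\max A$ by a short contradiction (the paper deduces $\pi(n-k^{*})\ge k^{*}+1$ directly from $k^{*}+1\in A$, while you unfold $\pi(n-k^{*})=k^{*}$ into the two-sided bound $p_{k^{*}}\le n-k^{*}<p_{k^{*}+1}$ and squeeze; these are the same computation in different clothing). Case (ii) is where you genuinely diverge. The paper uses the extra information from Theorem~\ref{theo:iter}(ii) that $n-k'$ is prime to conclude $p_{k'+1}=n-k'$, hence $k'+p_{k'}<n<(k'+1)+p_{k'+1}$, so that $n$ falls strictly between two consecutive values of the increasing sequence $j+p_j$. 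You instead ignore the primality datum entirely and argue that the non-increasing map $g(x)=\pi(n-x)$ can have no integer fixed point once it has a $2$-cycle at consecutive integers: $g(x)>x$ for $x\le k'$ and $g(x)<x$ for $x\ge k'+1$, and solutions are fixed points. Your argument is more elementary and self-contained; it is in fact the same monotonicity trick the paper deploys separately in its closing paragraph (the display \eqref{eq:kk'k''} excluding ``undetected'' solutions), so your route subsumes that final check and makes it unnecessary. What the paper's route buys in exchange is sharper information: it pinpoints exactly where $n$ sits relative to the sequence $j+p_j$, which is what makes the cycle interpretable (the equation ``just misses'' a solution at $k'$), whereas your argument only certifies nonexistence.
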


As in the case of Theorem~\ref{theo:iter}, we postpone the proof of Theorem~\ref{theo:sol} to Section~\ref{sec:proofs}.

Both cases (i) and (ii) can really occur, as the following illustrative examples with small numbers show. The case (i) occurs, for instance, with $n=51$ or $n=76$. 
For $n=51$, the successive $k_j$ in \eqref{eq:iter} are
\begin{gather*}
  k_0 = \pi(51) = 15,
  \quad
  k_1 = \pi(51-15) = 11,
  \\
  k_2 = \pi(51-11) = 12,
  \quad
  k_3 = \pi(51-12) = 12,
\end{gather*}
so we have reached the fixed point $k^{*} = 12$; but $k^{*} + p_{k^{*}} = 12 + 37 = 49 \ne 51$,
so the equation $51 = k + p_{k}$ has no solution.
For $n=76$, the successive $k_j$ in \eqref{eq:iter} are
\begin{gather*}
  k_0 = \pi(76) = 21,
  \quad
  k_1 = \pi(76-21) = 16,
  \\
  k_2 = \pi(76-16) = 17,
  \quad
  k_3 = \pi(76-17) = 17,
\end{gather*}
so $k^{*} = 17$ is a fixed point; this time $k^{*} + p_{k^{*}} = 17+59 = 76$,
so we have found the solution of $76 = k + p_{k}$.
To illustrate the case (ii), let us take, for instance, $n=41$.
The successive $k_j$ in \eqref{eq:iter} are
\begin{gather*}
  k_0 = \pi(41) = 13,
  \quad
  k_1 = \pi(41-13) = 9,
  \quad
  k_2 = \pi(41-9) = 11,
  \\
  k_3 = \pi(41-11) = 10,
  \quad
  k_4 = \pi(41-10) = 11,
  \\
  k' = k_5 = k_7 = k_9 = \cdots = 10,
  \quad
  k'' = k_6 = k_8 = k_{10} = \cdots = 11.
\end{gather*}
Then, $41 = k + p_{k}$ has no solution.

Some examples with bigger numbers are given in Table~\ref{tab:Fib}.
The iterative algorithm is applied to some Fibonacci numbers (\seqnum{A000045} in the OEIS), showing whether the method converges to a fixed point $k^{*}$ or to a cycle $\{k',k''\}$, as well as the number of iterations to reach it. Observe that the number of iterations for $n \le F_{70} \approx 1.9\cdot 10^{14}$ is always less than ten; for each equation, we also see how our method is faster than the bisection method, which requires a considerably higher number of iterations.

\begin{table}
\centering
\begin{tabular}{c@{}c@{}c@{}cc}
\hline
$F_m$ & Iterations & $k^{*}$ or $\{k',k''\}$ 
& Is $k^{*}$ a solution? & $\lfloor\log_2{F_{m}}\rfloor$
\\
\hline
$F_{12} = 144$ & 3 & $30$ & no & 7
\\
$F_{13} = 233$ & 3 & $\{42, 43\}$ & --- & 7
\\
$F_{14} = 377$ & 4 & $\{64, 65\}$ & --- & 8
\\
$F_{15} = 610$ & 3 & $97$ & no & 9
\\
$F_{27} = 196\,418$ & 5 & $\{16\,347, 16\,348\}$ & --- & 17
\\
$F_{42} = 267\,914\,296$ & 7 & $13\,887\,473$ & yes & 27
\\
$F_{50} = 12\,586\,269\,025$ & 8 & $\{543\,402\,114, 543\,402\,115\}$ & --- & 33
\\
$F_{53} = 53\,316\,291\,173$ & 7 & $2\,166\,313\,972$ & yes & 35
\\
$F_{66} = 27\,777\,890\,035\,288$ & 9 & $899\,358\,426\,281$ & no & 44
\\
$F_{67} = 44\,945\,570\,212\,853$ & 9 & $1\,432\,816\,693\,546$ & no & 45
\\
$F_{68} = 72\,723\,460\,248\,141$ & 9 & $2\,283\,240\,409\,254$ & no & 46
\\
$F_{69} = 117\,669\,030\,460\,994$ & 9 & $3\,639\,256\,692\,076$ & no & 46
\\
$F_{70} = 190\,392\,490\,709\,135$ & 9 & $5\,801\,907\,791\,391$ & no & 47
\\
\hline
\end{tabular}%
\caption{The iterative method applied to solve $F_m = k + p_k$ where $F_m$ are the Fibonacci numbers with $12 \le m \le 70$.
We omit the $F_{m}$ with $15 < m \le 65$ whose corresponding iterative methods converge to a fixed point $k^{*}$ which is not a solution of $F_m = k + p_k$. In the last column, $\lfloor\log_2{F_{m}}\rfloor$ is a rather precise estimation of the number of iterations required to solve the equation with the bisection method}.
\label{tab:Fib}
\end{table}

Instead of~\eqref{eq:n=k+pk} we can consider the more general equation
\begin{equation*}
  n = ak + bp_k,
\end{equation*}
with $b \ge 1$ and $a \in \mathbb{Z} \setminus \{0\}$
(the trivial case $a=0$ is excluded).

In this setting we can consider the iterative process which starts with $k_0 = \pi(n/b)$ and continues with
\[
  k_{j+1} = \pi((n-ak_j)/b), 
  \qquad j \ge 0.
\]
Now, the behavior of the iterative method and its relation with the solution of $n = ak + bp_k$ is a bit more complicated than in the case $a=b=1$. We analyze it in Section~\ref{sec:gen}.

%------------
\section{Proof of Theorems~\ref{theo:iter} and~\ref{theo:sol}}
\label{sec:proofs}
%------------

\begin{proof}[Proof of Theorem~\ref{theo:iter}]
It is clear that $k_1 = \pi(n-k_0) \le \pi(n) = k_0$. From the inequality $k_1 \le k_0$, it follows that $k_2 = \pi(n-k_1) \ge \pi(n-k_0) = k_1$. Now, from $k_2 \ge k_1$ we get $k_3 = \pi(n-k_2) \le \pi(n-k_1) = k_2$, and so on. That is,
\begin{equation}
\label{eq:alterna}
  k_{2j+1} \le k_{2j}
  \quad\text{and}\quad
  k_{2j+2} \ge k_{2j+1},
  \qquad \text{for }j = 0,1,2,\dots.
\end{equation}
Moreover, $k_2 = \pi(n-k_1) \le \pi(n) = k_0$, and from the inequality $k_2 \le k_0$ it follows that $k_3 = \pi(n-k_2) \ge \pi(n-k_0) = k_1$. Now, from $k_3 \ge k_1$ we get $k_4 = \pi(n-k_3) \le \pi(n-k_1) = k_2$, and so on. Thus, $(k_{2j})_{j\ge0}$ is a decreasing sequence, while $(k_{2j+1})_{j\ge0}$ is an increasing sequence. They are both bounded sequences of positive integers, so eventually constant; that is, there exist $k'$ and $k''$ (with $k' \le k''$) and a certain $J$ such that
\[
  k_{2j+1} = k' 
  \quad\text{and}\quad
  k_{2j} = k'' 
  \qquad \text{for }j \ge J.
\]
If $k' = k''$, then $k^{*} = k'=k''$ is the fixed point in~\eqref{eq:iter}, the possibility (i) in the theorem. 

Otherwise, let us suppose that $k' < k''$, so
\[
  k' = \pi(n-k'') < \pi(n-k') = k''.
\]
Then $\pi(n-k')-\pi(n-k'')=k''-k'$, so there are $k''-k'$ prime numbers $q_j$ such that
\begin{equation}
\label{eq:qj}
  n-k'' < q_1 < q_2 < \cdots < q_{k''-k'-1} < q_{k''-k'} \le n-k'.
\end{equation}
Except for $2$ and $3$, prime numbers are not consecutive numbers, so \eqref{eq:qj} is only possible in two cases:
\begin{itemize}
\item Case $k''-k'=2$, with $n-k''=1$, $n-k'=3$, $q_1 = 2$ and $q_2=3$. 
In this case, we would have $k' = \pi(n-k'') = \pi(1) = 0$, which cannot happen because $k'$ is the limit of the increasing sequence $(k_{2j+1})_{j\ge0}$.
\item Case $k''-k'=1$, with a unique prime $q_1 = n-k'$ in~\eqref{eq:qj}. Then $k''=k'+1$ and
\[
  \pi(n-k') = k'+1, \quad \pi(n-k'-1) = k'.
\]
This is the possibility (ii) in the theorem.
\qedhere
\end{itemize}
\end{proof}

\begin{proof}[Proof of Theorem~\ref{theo:sol}]
Let us first analyze the case (i). 
We have $k^{*} = \pi(n-k^{*})$ and $p_{k^{*}} \le p_{\pi(n-k^{*})} \le n-k^{*}$, so $k^{*} + p_{k^{*}} \le n$. Thus, $k^{*} \in A$ where $A$ is the set
\[
  A = \{ j \ge 1 : j + p_j \le n \}.
\]
To conclude the proof of case (i), it is enough to check that $\max A = k^{*}$ (in particular, this implies $k^{*} + p_{k^{*}} = n'$). 
Let us suppose that $\max A \ne k^{*}$. In this case, $k^{*}+1 \in A$, so $p_{k^{*}+1} + k^{*}+1 \le n$, and therefore $p_{k^{*}+1} \le n - k^{*} - 1 \le n - k^{*}$. This implies that $\pi(n-k^{*}) \ge k^{*} + 1$, which is false because $\pi(n-k^{*}) = k^{*}$.

In case (ii), we have
\[
  k' = \pi(n-k'') < \pi(n-k') = k'',
\]
with $k'' = k'+1$.
Using that $n-k'$ is a prime number, we have
$p_{k'+1} = p_{k''} = p_{\pi(n-k')} = n-k'$, so $k' + p_{k'+1} = n$ and then
\[
k' + p_{k'} < n < k'+1 + p_{k'+1}.
\]
This clearly implies that $n = k + p_k$ has no solution.

Finally, let us check that there cannot exist any solution of \eqref{eq:n=k+pk} which is 
not detected by the iterative method~\eqref{eq:iter}. 
Let us suppose on the contrary that $k$ is a solution not detected by the method;
then, $n = k + p_k$ and $\pi(n-k) = k$, so $k$ is a fixed point of
$k_{j+1} = \pi(n-k_j)$, that is, a fixed point of \eqref{eq:iter} except for the starting step $k_0 = \pi(n)$.

But we have proved that the iterative method~\eqref{eq:iter}, starting 
in~$k_0 = \pi(n)$, converges to $k^{*}$ or to the 2-cycle $\{k',k''\}$
with $k''=k'+1$. If \eqref{eq:iter} converges to $k^{*}$, this $k^{*}$ is the unique solution of
\eqref{eq:n=k+pk}, so $k=k^{*}$.
If \eqref{eq:iter} converges to $\{k',k''\}$, then $k<k'$ or $k>k''$. 
However, both cases are impossible:
\begin{equation}
\label{eq:kk'k''}
\begin{gathered}
k < k' \;\Rightarrow\; k = \pi(n-k) \ge \pi(n-k') = k'' \;\Rightarrow\; k \ge k'', \text{ a contradiction};
\\
k > k'' \;\Rightarrow\; k = \pi(n-k) \le \pi(n-k'') = k' \;\Rightarrow\; k \le k', \text{ a contradiction}.
\end{gathered}
\end{equation}
Then, there cannot exist such fixed point $k$.
\end{proof}

%------------
\section{Generalization to the equation $n = a k + b p_k$}
\label{sec:gen}
%------------

A more general equation than \eqref{eq:n=k+pk} is
\begin{equation}
\label{eq:n=ak+bpk}
  n = a k + b p_k
\end{equation}
with $b \in \mathbb{Z}^{+}$ and $a \in \mathbb{Z} \setminus\{0\}$.
Given $n$, we want to know if there exists some $k$ satisfying~\eqref{eq:n=ak+bpk}
and how to find it by the iterative process 
\begin{equation}
\label{eq:iterab}
  k_{j+1} = \pi((n-ak_j)/b), 
  \qquad j \ge 0,
\end{equation}
starting with $k_0 = \pi(n/b)$.

Of course, the reason to choose the iterative process \eqref{eq:iterab} is that,
if $k$ is a solution of \eqref{eq:n=ak+bpk}, then $(n-ak)/b = p_k$, a prime number,
so $\pi((n-ak)/b) = \pi(p_k) = k$ and $k$ is a fixed point of~\eqref{eq:iterab}.

Depending on whether $a>0$ or $a<0$, the beginning of the iterative process \eqref{eq:iterab} varies. 
If $a>0$ we have $k_1 \le k_0$; however, if $a<0$ we have $k_1 \ge k_0$. 
Moreover, if $a<0$, the sequence $ak+bp_k$ is not always increasing with $k$, so we cannot ensure that the solution of $n = ak+bp_k$, if it exists, is unique. These differences motivate to consider both cases $a>0$ and $a<0$ separately.

In what follows, we present this study in a more informal way, without stating the properties as theorems. Actually, the behavior of the iterative method and its relation with the solutions of \eqref{eq:n=ak+bpk} is similar to what happens in Theorems~\ref{theo:iter} and~\ref{theo:sol} in the case $a>0$, but rather different in the case $a<0$.

%------------
\subsection{Case $a>0$}
\label{sec:gena>0}
%------------

Let us start noticing that we want the $k_j$ that arise in the iterative method~\eqref{eq:iterab} to be positive integers (otherwise, $p_{k_j}$ does not exist). 
Due to the equivalence $\pi(x) \sim x/\log(x)$ for $x\to\infty$ (prime number theorem),
we can guarantee that $k_j > 0$ for all $j$ if $n$ is large enough
(the exact required size depends on $a$ and~$b$). Indeed, for $n\to\infty$ we have
\[
  k_0 = \pi(n/b) \sim \frac{n/b}{\log(n/b)} \sim \frac{n}{b\log(n)}
\]
and
\[
  k_1 = \pi((n-ak_0)/b) \sim \frac{(n-ak_0)/b}{\log((n-ak_0)/b)} 
  \sim \frac{n}{b\log(n)},
\]
so $k_0, k_1 \ge 1$ if $n$ is big enough. 
Then, given the equation $n = a k + b p_k$, we can assume that $n$ satisfies 
\begin{equation}
\label{eq:admis}
  k_1 = \pi\big((n-ak_0)/b\big) = \pi\big((n-a\pi(n/b))/b\big)  \ge 1
\end{equation}
(this holds except for a finite set of $n$'s). Note that, for $a=b=1$, \eqref{eq:admis} becomes $\pi(n-\pi(n)) \ge 1$, which holds for every $n \ge 4$.
After imposing this technical restriction for small values of $n$, let us analyze the iterative method.

The assumption $a>0$ gives $k_1 = \pi((n-ak_0)/b) \le \pi(n/b) = k_0$. From $k_1 \le k_0$, it follows that $k_2 = \pi((n-ak_1)/b) \ge \pi((n-ak_0)/b) = k_1$, and from $k_2 \ge k_1$, we get $k_3 = \pi((n-ak_2)/b) \le \pi((n-k_1)/b) = k_2$, and so on. 
Moreover, $k_2 = \pi((n-ak_1)/b) \le \pi(n/b) = k_0$
and $k_3 = \pi((n-ak_2)/b) \ge \pi((n-ak_0)) = k_1$, and so on.
In particular, if $k_1 \ge 1$ then $k_j \ge 1$ for every $j$, 
so assuming \eqref{eq:admis} is enough to ensure that all the $k_j$ will be positive integers.

Following as in the proof of Theorem~\ref{theo:iter} we get that $(k_{2j})_{j\ge0}$ is a decreasing sequence, $(k_{2j+1})_{j\ge0}$ is an increasing sequence, and there exist $k'$ and~$k''$ (with $k' \le k''$) and a certain $J$ such that
\[
  k_{2j+1} = k' 
  \quad\text{and}\quad
  k_{2j} = k'' 
  \qquad \text{for }j \ge J.
\]
If $k' = k''$, then $k^{*} = k'=k''$ is the fixed point in~\eqref{eq:n=ak+bpk}. 
Otherwise, if $k' < k''$ we have
\[
  k' = \pi((n-ak'')/b) < \pi((n-ak')/b) = k''
\]
so $\pi((n-ak')/b)-\pi((n-ak'')/b)=k''-k'$, and there are $k''-k'$ prime numbers $q_j$ such that
\begin{equation}
\label{eq:qjab}
  \frac{n-ak''}{b} < q_1 < q_2 < \cdots < q_{k''-k'-1} < q_{k''-k'} \le \frac{n-ak'}{b}.
\end{equation}
{}From now on, the case $k' < k''$ is somewhat different.

Let us suppose that two of these primes are $q_1 = 2$ and $q_2 = 3$. Then $(n-ak'')/b = 1$ and $k' = \pi((n-ak'')/b) = \pi(1) = 0$, which cannot happen. Therefore, we can assume that $q_{j+1}-q_j \ge 2$ for all these primes~$q_j$. Clearly, any interval $(x,y]$ containing $m$ of these primes must satisfy $y-x > 2(m-1)$. In the case \eqref{eq:qjab}, this means that
\[
  \frac{n-ak'}{b} - \frac{n-ak''}{b} > 2(k''-k')-2,
\]
that is, $a(k''-k')/b > 2(k''-k')-2$, or
\begin{equation}
\label{eq:2b-a2b}
  (2b-a)(k''-k') < 2b.
\end{equation}
Then, two situations can occur:
\begin{itemize}
\item If $a \ge 2b$, \eqref{eq:2b-a2b} is trivial and does not imply any restriction on $k'$ and~$k''$.
\item If $a < 2b$, \eqref{eq:2b-a2b} can be written as
\begin{equation}
\label{eq:2b/2b-a}
  k''-k' < \frac{2b}{2b-a}.
\end{equation}
In the particular case $a \le b$ we have $1 < 2b/(2b-a) \le 2$, so \eqref{eq:2b/2b-a} is equivalent to 
$k'' = k'+1$ (recall that we are analyzing the case $k'<k''$). 
\end{itemize}

In the case $a \le b$, the remaining arguments in the proofs of Theorems~\ref{theo:iter} and~\ref{theo:sol} are valid. In particular \eqref{eq:kk'k''}, which guarantees that the iterative method \eqref{eq:iterab}, starting at $k_0 = \pi(n/b)$, detects as fixed points all the solutions of $n = ak + bp_k$.

If $a \ge 2b$ or $b < a < 2b$, we cannot ensure that $k''=k'+1$. There might be an integer $k$ with $k' < k < k''$ which is a fixed point
\[
  \pi((n-ak)/b) = k
\]
and a solution of $n = ak + bp_k$ not detected by the iterative method \eqref{eq:iterab} starting in $k_0=\pi(n/b)$. That is, the method converges to the 2-cycle $\{k',k''\}$ instead of $k$. In practice, if we are looking for a solution of $n = ak + bp_k$, it is enough to check if every $k$ satisfying $k' < k < k''$ is a solution.

Let us give some examples to illustrate that these situations can occur.

For the case $a \ge 2b$, let us see what happens with the equation
\[
  n = 7k+2p_k
\]
for several values of~$n$, with convergence to a fixed point $k^{*}$ or to cycles $\{k',k''\}$ with different values of $k''-k'$. 
For $n=10\,040$, we have $k_0 = 672$ and the iterative process converges to $k^{*} = 474$, which is a solution of the equation; 
instead, for $n = 10\,041$, again $k_0 = 672$ and the iterative process converges to $k^{*} = 474$, which is not a solution of the equation.
For $n=10\,073$, we have $k_0 = 674$ and the iterative process converges to the cycle $\{474,476\}$.
For $n=10\,300$, we have $k_0 = 686$ and the iterative process converges to the cycle $\{482,485\}$.
For $n=10\,325$, we have $k_0 = 687$ and the iterative process converges to the cycle $\{483,487\}$.
For $n=10\,532$, we have $k_0 = 698$ and the iterative process converges to the cycle $\{491,497\}$.
In all these cases, less that $10$ iterations are enough to reach $k^{*}$ or $\{k',k''\}$.

For the case $b < a < 2b$, in the equation $12\,660 = 3k+2p_k$ we have $k_0 = 824$ and the iterative method converges to the cycle $\{699,701\}$, where $k''-k'=2$. 

Let us also show some instances of the equation $n = ak+bp_k$ with a solution $k$ such that the iterative method \eqref{eq:iterab} starting with $k_0=\pi(n/b)$ converges to a cycle $\{k',k''\}$. 
For $n = 2\cdot 33 + p_{33} = 203$, where $k=33$ is clearly a solution of $n = 2k + p_{k}$, the iterative method converges to the cycle $\{32, 34\}$, with $k''-k'=2$.
For $n = 6\cdot 100 + p_{100} = 1141$, where $k=100$ is the solution of $n = 6k + p_{k}$, the iterative method converges to the cycle $\{80, 121\}$, with $k''-k'=41$.

%------------
\subsection{Case $a<0$}
\label{sec:gena<0}
%------------

Let us start noticing that, in general, $ak + bp_k$ is not increasing with $k$ when $a<0$. 
Therefore, we cannot ensure the unicity of the solution of $n = ak + bp_k$ for fixed $a$, $b$ and~$n$.
For instance, $n=105$ has two solutions $k$ in the equation $n = -2k+p_k$:
\[
  105 = -2 \cdot 43 + p_{43} = -2 \cdot 44 + p_{44}.
\]
With $n = -3k+p_k$ we can also find two solutions that are not consecutive integers: 
\[
  100 = -3 \cdot 59 + p_{59} = -3 \cdot 61 + p_{61}.
\]
There can exist more than two solutions, as is the case of the equation $n = -4k+p_k$:
\[
  99 = -4 \cdot 83 + p_{83} = -4 \cdot 85 + p_{85} = -4 \cdot 86 + p_{86}.
\]
However, $p_k$ always grows faster than $k$, so the sequence $ak + bp_k$ is increasing with $k$ if $0 < -a < b$. 
In this case, again the solution of $n = ak + bp_k$, if it exists, is unique.

Anyway, taking into account that $p_k \sim k \log k$ when $k \to \infty$, we have
$ak + bp_k \sim ak + bk\log k = (a+b\log k)k$. 
Thus, $ak + bp_k$ is increasing with $k$ for $k$ big enough. 
In particular, this ensures that, for fixed $a$, $b$ and~$n$, the number of solutions of $n = ak + bp_k$ is always finite.
Precise estimates of the form
\[
  C_1 \, \frac{x}{\log x} \le \pi(x) \le C_2 \, \frac{x}{\log x}
\]
with $C_1, C_2 > 0$, which yield an upper bound (depending on $a$, $b$, $n$, $C_1$ and $C_2$) of the number of solutions, can be found in many texts of number theory (see, for instance, \cite[Theorem~8.8.1]{BaSh}).

Now, let us analyze the behavior of the iterative method \eqref{eq:iterab} starting in $k_0 = \pi(n/b)$.
As shown below, nothing similar to \eqref{eq:alterna} appears in the case $a<0$;
the property \eqref{eq:alterna} is very important in the analysis of the case $a>0$,
and the lack of a suitable alternative is a great handicap.
Although the analysis of the case $a<0$ is presented here for completeness, it must be concluded that
the iterative method described in this paper is not so useful to solve~\eqref{eq:n=ak+bpk}
as it is when $a>0$.

Since $a<0$, we have $k_1 = \pi((n-a k_0)/b) \ge \pi(n/b) = k_0$; 
and, if we assume that $k_j \ge k_{j-1}$, also
\[
  k_{j+1} = \pi((n-a k_j)/b) \ge \pi((n-a k_{j-1})/b) = k_j.
\]
Then $k_j$ is an increasing sequence and it cannot tend to a cycle.

Let $s$ be any number such that 
\begin{equation}
\label{eq:cotaa<0}
  \pi((n-as)/b) \le s;
\end{equation}
the existence of such an $s$ follows from the estimate $\pi(x) \sim x/\log x$ when $x \to \infty$.
{}Then (recall that $a<0$)
\[
  k_0 = \pi(n/b) \le \pi((n-as)/b) \le s;
\]
and assuming that $k_j \le s$ gives 
\[
  k_{j+1} = \pi((n-ak_j)/b) \le \pi((n-as)/b) \le s.
\]
This proves that the increasing sequence of integers $k_j$ produced by the iterative method \eqref{eq:iterab} starting with $k_0 = \pi(n/b)$ is bounded.
So there exists some~$k^{*}$ (and an index $J$) such that
\[
  k_j = k^{*} \quad\text{for every } j \ge J,
\]
and thus $k^{*}$ is a fixed point of~\eqref{eq:iterab}.

We cannot ensure that the fixed point $k^{*}$ is a solution of \eqref{eq:n=ak+bpk}.
A solution of \eqref{eq:n=ak+bpk}, if it exists, satisfies $\pi((n-as)/b) = s$;
in particular, it also satisfies \eqref{eq:cotaa<0}. 
Then $k^{*} \le s$ for any possible solution $s$ of \eqref{eq:n=ak+bpk} such that $s \ge \pi(n/b) = k_0$.
In practice, we can continue checking whether $k^{*}$, $k^{*}+1$, $k^{*}+2$,\dots\ are solutions or not 
of \eqref{eq:n=ak+bpk} until, when substituting in $ak+bp_k$, we get a value greater than~$n$; 
actually, an additional precaution is necessary: 
we must continue the checking until reaching values of $k$ for which the sequence $ak+bp_k$ is already increasing.

For an example of this behavior, let us take $n = -7 \cdot 2000 + p_{2000} = 3389$ (that is, $a=-7$ and $b=1$), 
so $s = 2000$ is a solution of the equation $3389 = -7k+p_k$. However, the iterative process $k_{j+1} = \pi(3389+7k_j)$ 
starting in $k_0 = \pi(3389) = 477$ converges to $k^{*} = 1989$, which is not a solution of the equation. Observe that $s-k^{*} = 11$.

Finally, let us observe an extra surprise: in the case $a<0$, the fixed point $k^{*}$ of the iterative method 
is never a solution of $n = ak+b_k$. If the equation has no solutions, there is nothing to prove. 
If the equation has a solution $s$, let us prove that the fixed point $k^{*}$ cannot be $k^{*} = s$, but $k^{*} < s$. 
Indeed, it is enough to check that, if $k_j < s$, also $k_{j+1} < s$. 
We have $(n-ak_j)/b < (n-as)/b = p_s$, a prime, so $\pi((n-ak_j)/b) < s$.
But $\pi((n-ak_j)/b) = k_{j+1}$, so $k_{j+1} < s$, just as we were looking for.

%------------
\section{Acknowledgments}
%------------

I am grateful to Samuel Stigliano, an Uruguayan student (of medicine!), 
for the motivation to study this problem. 
In March 2021, he told me by email that both numbers of the amicable pair
$(220, 284)$ can be written as $k + p_k$, 
namely $220 = 41 + p_{41}$ and $284 = 51 + p_{51}$, and he asked if
a similar property happens with other pairs of amicable numbers
(\seqnum{A259180} in the OEIS).
Trying to answer his question, I wanted to decide if a number
$n$ can be written as $n = k + p_{k}$ with a fast algorithm,
and then I arrived at the iterative method presented in this paper.
In particular, this allowed me to find some pairs of amicable numbers 
that satisfy the requested property; the smaller is 
\[
1\,392\,368 = 99\,525 + p_{99\,525},
\quad
1\,464\,592 = 104\,283 + p_{104\,283}.
\]
This pair of amicable numbers was discovered by Euler in 1747.

%------------

%------------

\bigskip
\hrule
\bigskip

\noindent 2020 \emph{Mathematics Subject Classification: }
Primary 11B83;
Secondary 11A41, 11Y16, 37C25, 65H05.

\noindent \emph{Keywords: } 
integer sequence; prime number; iterative method to solve equations; dynamics.

\bigskip
\hrule
\bigskip

\noindent (Concerned with sequences
\seqnum{A014688}, \seqnum{A000045}, and \seqnum{A259180}.)

\bigskip
\hrule
\bigskip

\vspace*{+.1in}
%\noindent
%Received ---------; revised version received ---------.
%Published in \emph{Journal of Integer Sequences}, ---------.
\noindent
Received March 21 2021; 
revised versions received March 23 2021; November 16 2021; November 21 2021.
Published in \emph{Journal of Integer Sequences}, November 27 2021.

\bigskip
\hrule
\bigskip

\noindent
Return to
\htmladdnormallink{Journal of Integer Sequences home page}{http://www.cs.uwaterloo.ca/journals/JIS/}.
\vskip .1in

%------------
\end{document}